\documentclass[12pt,leqno]{amsart}
\usepackage{amsmath,amsfonts,amssymb,amscd,amsthm,amsbsy,upref}

%\NeedsTeXFormat{LaTeX2e}
%\documentclass{lms}

%\usepackage{diagrams}
\newtheorem*{rem}{Remark}

\usepackage{ifthen}

% my list environment

\newcounter{mylisti} \newcounter{mylistii}
\newcounter{nest}
\newcommand{\defaultlabel}{}

\newenvironment{mylist}[1]{%
\addtocounter{nest}{1}
\ifthenelse{\value{nest}=1}{%
  \renewcommand{\defaultlabel}{(\roman{mylisti})\hfill}}{%
  \renewcommand{\defaultlabel}{(\alph{mylistii})\hfill}}
% \settowidth{\toowide}{(iii)}
\begin{list}{\defaultlabel}{%
    \ifthenelse{\value{nest}=1}{\usecounter{mylisti}}{%
      \usecounter{mylistii}}
    
    %\addtolength{\topsep}{1ex}
    \addtolength{\itemsep}{0.5ex}
    \settowidth{\labelwidth}{#1}
  %  \ifthenelse{\lengthtest{\labelwidth > \toowide}}{%
   %   \setlength{\labelwidth}{\toowide}}{}
    \setlength{\leftmargin}{\labelwidth}
    \addtolength{\leftmargin}{\labelsep}}}{\addtocounter{nest}{-1}
\end{list}}

% bar

% black board letters

\newcommand{\bn}{\ensuremath{\mathbb N}}

% curly letters

\newcommand{\cA}{\ensuremath{\mathcal A}}
\newcommand{\cB}{\ensuremath{\mathcal B}}
\newcommand{\cC}{\ensuremath{\mathcal C}}

\newcommand{\cJ}{\ensuremath{\mathcal J}}

\newcommand{\cL}{\ensuremath{\mathcal L}}

\newcommand{\cT}{\ensuremath{\mathcal T}}

% curly bar

% curly tilde

% fraktur

% tilde

% hat

% restricted star

% star hat

% underline

% vectors

% functions

% absolute values

% average

% big binomial coefficient

% bilinear maps

% block derivative and index

% Cantor-Bendixson derivative and index

%closure

% convex and s-convex hull

% codimension

% diagonal (operator)

% diameter

% distance
\newcommand{\dist}{\ensuremath{\mathrm{dist}}}

% equal up to an error

% essential supremum

% extreme points

% families of subsets of N

% finite-codimensional

% fragmentation index

% ideals

% identity operator

% image

% index of a tree

% indicator function

% inner product

% integer part

% interior

% lattice operations

%linear maps

% maximal and minimal elements of a set system

% norms
\newcommand{\norm}[1]{\lVert #1\rVert}
\newcommand{\bignorm}[1]{\big\lVert #1\big\rVert}

% oscillation

% probability

% rank

% real and imaginary parts

% restriction
\newcommand{\restrict}{\ensuremath{\!\!\restriction}}

% sign

% slanted fraction

% span, closed span

% support and range

% symmetric difference

% Szlenk index and convex Szlenk index

% tree closure

% trace

% volume

% weak derivative and index

% other symbols

\newcommand{\co}{\mathrm{c}_0}

 % does not imply

 % converges
 % weakly converges
 % weak star converges
\renewcommand{\geq}{\geqslant}
\renewcommand{\leq}{\leqslant}

% misc

\newcommand{\ds}{\displaystyle}

\newcommand{\ie}{\textit{i.e.,}\ }

%The \newtheorem command is used to define theorem-like environments
%that normally REQUIRE A PROOF, for example:

% theorems

\newtheorem{thm}{Theorem}

\newtheorem{cor}[thm]{Corollary}

%\newnumbered{rem}{Remark}
%\newnumbered{problem}{Problem}

%\simpleequations

% TOP MATTER
\begin{document}
\title[Closed subideals of operators]{The cardinality of the sublattice of closed ideals of operators between certain classical sequence spaces}	
\author{D.~Freeman, Th.~Schlumprecht and A.~Zs\'ak}

\subjclass[2010]{47L20 (primary), 47B10, 47B37 (secondary)}

\thanks{The first author was supported by grant 353293 from the Simons Foundation. The second author's research was supported by
NSF grant DMS-1912897.}

\maketitle

\begin{abstract}
  Theorem~A and Theorem~B of~\cite{FSZ}  state that for
 $1<p<\infty$ the lattice of closed ideals of $\cL(\ell_p,\co)$,
 $\cL(\ell_p,\ell_\infty)$ and of $\cL(\ell_1,\ell_p)$ are at least
 of cardinality $2^{\omega}$. Here we show that
 the cardinality of the lattice of closed ideals of
 $\cL(\ell_p,\co)$, $\cL(\ell_p,\ell_\infty)$ and of
 $\cL(\ell_1,\ell_p)$, is at least $2^{2^\omega}$, and thus  equal to
 it.
\end{abstract}

In~\cite{FSZ} we construct $2^{\omega}$ operators from $\ell_p$ to
$\co$ which generate distinct closed operator ideals in
$\cL(\ell_p,\co)$. Here we show that we can naturally choose a
subset of those operators of size $2^{\omega}$ such that not only does
each operator generate a distinct closed operator ideal, but each
subset of these operators also generates a distinct closed operator
ideal. Hence there are in fact $2^{2^{\omega}}$ distinct closed
operator ideals in $\cL(\ell_p,\co)$.

Let $1\leq p<\infty$. For appropriately chosen sequences $(u_n)$
and $(v_n)$ in $\bn$ we constructed a uniformly bounded sequence
$(T_n)$ of operators $T_n\colon\ell_{2}^{u_n}\to \ell_\infty ^{v_n}$
given by appropriately scaled RIP matrices, and defined for $M\subset
\bn$ the operator
\[
T_M\colon U=(\oplus_{n\in\bn}\ell_2^{u_n})_{\ell_p}\to
V=(\oplus_{n\in\bn}\ell_\infty^{v_n})_{\co},\quad (x_n)\mapsto
Q_M(T_n(x_n):n\in\bn)\ ,
\]
where $Q_M\colon V\to V$ is the canonical projection onto the
coordinates in $M$.

For Banach spaces $X,Y,W,Z$, and for a set of operators
$\cT\subset\cL(W,Z)$, we let $\cJ^{\cT}(X,Y)$ be the closed ideal of
$\cL(X,Y)$ generated by $\cT$. Thus $\cJ^{\cT}(X,Y)$ is the closure in
$\cL(X,Y)$ of
\[
\Big\{ \sum_{j=1}^n A_j S_j B_j:\,n\in\bn,\ 
(S_j)_{j=1}^n\subset\cT,\ (A_j)_{j=1}^n\subset \cL(Z,Y),\ 
(B_j)_{j=1}^n\subset\cL(X,W)\Big\}\ .
\]
We write $\cJ^T(X,Y)$ instead of $\cJ^{\cT}(X,Y)$ if $\cT=\{T\}$.

For infinite subsets $M,N$ of
$\bn$, \cite[Theorem 1]{FSZ}  states that:
\begin{mylist}{(ii)}
\item
 If $M\setminus N$ is infinite then $T_M\not\in \cJ^{T_N}(U,V)$,
\item
 if  $N\setminus M$ is finite then $\cJ^{T_N}\subset \cJ^{T_M}(U,V)$.
\end{mylist}
Moreover, the proof of~\cite[Theorem~1]{FSZ} shows that if
$M\setminus N$ is infinite, then there is a functional
$\Phi\in\cL(U,V)^*$, $\norm{\Phi}\leq 1$, with $\Phi(T_M)=1$ and 
$\Phi\restrict_{\cJ^{T_N}(U,V)}=0$, which means that
$\dist(T_M,\cJ^{T_N}(U,V))\geq 1$. It follows from the proof
of~\cite[Theorem~6]{FSZ}(see also the subsequent remark) that if $W$
is a Banach space containing 
$\co$ and $J\colon V\to W$ is an isomorphic embedding, then we also
have $\dist(J\circ T_M,\cJ^{T_N}(U,W))\geq c$, where
$c=\norm{J^{-1}}^{-1}$.

Using now the same approach as in~\cite{JS},  we can easily deduce the
following corollary.
\begin{cor}
 Let $W$ be a Banach space containing $\co$. Then the cardinality of
 the lattice of closed ideals of $\cL(U,W)$ is at least $2^{2^\omega}$. In
 particular, for $1<p<\infty$, the cardinality of the lattice of
 closed ideals of $\cL(\ell_p,\co)$ and of
 $\cL(\ell_p,\ell_\infty)$ is $2^{2^\omega}$.
\end{cor}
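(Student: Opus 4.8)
The plan is to combine the uniform distance estimate $\dist(J\circ T_M,\cJ^{T_N}(U,W))\geq c$ recalled above with the classical almost-disjoint-family device used in~\cite{JS}. First I would fix an isomorphic embedding $J\colon V\to W$, set $c=\norm{J^{-1}}^{-1}>0$, and fix an \emph{almost disjoint} family $\cA$ of infinite subsets of $\bn$ with $\abs{\cA}=2^\omega$ (for instance the one indexed by the branches of the infinite binary tree realised inside $\bn$). For each $\cB\subseteq\cA$ I would consider the closed ideal $\cI_\cB=\cJ^{\{J\circ T_M:M\in\cB\}}(U,W)$ of $\cL(U,W)$ generated by $\{J\circ T_M:M\in\cB\}$, and the goal is to show that $\cB\mapsto\cI_\cB$ is injective. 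Since $\abs{\mathcal P(\cA)}=2^{2^\omega}$, this exhibits at least $2^{2^\omega}$ closed ideals of $\cL(U,W)$. For the final sentence of the corollary one then takes $W=\co$ and $W=\ell_\infty$ (using that, by~\cite{FSZ}, $\cL(U,\,\cdot\,)$ has the same lattice of closed ideals as $\cL(\ell_p,\,\cdot\,)$), and observes that an operator in $\cL(\ell_p,\co)$ or in $\cL(\ell_p,\ell_\infty)$ is determined by the images of the unit vectors of $\ell_p$, so that each such space has cardinality at most $(2^\omega)^\omega=2^\omega$ and hence admits at most $2^{2^\omega}$ closed ideals; combined with the lower bound this forces equality.

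To establish injectivity I would argue as follows. Given $\cB_1\neq\cB_2$, pick $M_0\in\cB_1\setminus\cB_2$; then $J\circ T_{M_0}\in\cI_{\cB_1}$ trivially, so it suffices to show $J\circ T_{M_0}\notin\cI_{\cB_2}$. Every element of the (non-closed) ideal generated by $\{J\circ T_M:M\in\cB_2\}$ involves only finitely many of these operators, hence $\cI_{\cB_2}$ is the closure of $\bigcup\bigl\{\cJ^{\{J\circ T_M:M\in F\}}(U,W):F\subseteq\cB_2\text{ finite}\bigr\}$; since the distance from a point to a set equals its distance to the closure, it is enough to bound $\dist\bigl(J\circ T_{M_0},\cJ^{\{J\circ T_M:M\in F\}}(U,W)\bigr)$ from below by a constant not depending on the finite set $F\subseteq\cB_2$. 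Fixing $F=\{N_1,\dots,N_k\}\subseteq\cB_2$ and putting $N_F=N_1\cup\dots\cup N_k$, I note that $M_0\notin\cB_2$ forces $M_0$ to be distinct from every $N_i$, so each $M_0\cap N_i$ is finite by almost disjointness and hence $M_0\setminus N_F$ is infinite. On the other hand $N_i\setminus N_F=\emptyset$ is finite for every $i$, so part~(ii) of~\cite[Theorem~1]{FSZ} gives $T_{N_i}\in\cJ^{T_{N_i}}(U,V)\subseteq\cJ^{T_{N_F}}(U,V)$, and composing on the left with the bounded operator $J$ yields $J\circ T_{N_i}\in\cJ^{T_{N_F}}(U,W)$. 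As $\cJ^{T_{N_F}}(U,W)$ is a closed ideal of $\cL(U,W)$ — so it absorbs left composition with $\cL(W,W)$ and right composition with $\cL(U,U)$, and is closed under finite sums and limits — it follows that $\cJ^{\{J\circ T_M:M\in F\}}(U,W)\subseteq\cJ^{T_{N_F}}(U,W)$. Since $M_0\setminus N_F$ is infinite, the recalled distance bound gives $\dist\bigl(J\circ T_{M_0},\cJ^{T_{N_F}}(U,W)\bigr)\geq c$, and therefore also $\dist\bigl(J\circ T_{M_0},\cJ^{\{J\circ T_M:M\in F\}}(U,W)\bigr)\geq c$. Because this $c$ is uniform in $F$, we obtain $\dist(J\circ T_{M_0},\cI_{\cB_2})\geq c>0$, so $J\circ T_{M_0}\notin\cI_{\cB_2}$, which completes the argument.

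I do not expect a serious analytic obstacle here: all the quantitative input — the construction of the matrices $T_n$, the norm-one functional witnessing the distance estimate, and its stability when composed with the embedding $J$ — is already available from~\cite{FSZ}, and what remains is a soft deduction in the style of~\cite{JS}. The one point that has to be handled with care, and which I expect to be the crux, is collapsing a finite family $N_1,\dots,N_k\in\cB_2$ into the single set $N_F$ so that one distance estimate governs the whole finitely generated subideal $\cJ^{\{J\circ T_M:M\in F\}}(U,W)$; this relies on the ``cofinite inclusion'' property~(ii) of~\cite[Theorem~1]{FSZ} together with the uniformity of the constant $c$, and it is exactly what makes the ensuing passage to the closure of the union over all finite $F\subseteq\cB_2$ harmless.
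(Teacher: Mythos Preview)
Your proposal is correct and follows essentially the same route as the paper: fix an almost disjoint family of size $2^\omega$, assign to each subfamily the closed ideal generated by the corresponding operators, and separate two such ideals by collapsing a finite subfamily $\{N_1,\dots,N_k\}$ into $N_F=\bigcup_i N_i$ so that the single distance estimate $\dist(J\circ T_{M_0},\cJ^{T_{N_F}}(U,W))\geq c$ applies. The only cosmetic differences are that the paper generates its ideals by the $T_M$ themselves (as operators $U\to V$) rather than by $J\circ T_M$, and obtains the inclusion into $\cJ^{T_{N_F}}$ via the explicit identity $A_jT_{N_j}B_j=A_jQ_{N_j}T_{N_F}B_j$ instead of invoking part~(ii); your added remarks on the upper bound $2^{2^\omega}$ and on $U\cong\ell_p$ for $1<p<\infty$ are handled in the paper by the one-line observation ``$V\cong\co$ and $U\sim\ell_p$ when $p>1$''.
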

\begin{proof}
 Since $V\cong\co$ and $U\sim\ell_p$ when $p>1$, we need only to
 prove the first statement.
 Let $\cC$ be a family of infinite subsets of $\bn$ whose cardinality
 is  $2^\omega$ and whose elements are pairwise almost disjoint, \ie
 if $M\not=N$ are in $\cC$ then $N\cap M$ is finite. For
 $\cA\subset\cC$ we put $\cT(\cA)=\{ T_M: M\in \cA\}$ and
 $\cJ(\cA)=\cJ^{\cT(\cA)}(U,W)$. We claim that for any  two
 nonempty subsets $\cA$ and $\cB$ of $\cC$, we have 
 $\cJ(\cA)\neq \cJ(\cB)$. Indeed, without loss of generality we can
 assume 
 that $\cA\setminus\cB\neq\emptyset$. Let $M\in\cA\setminus\cB$ and
 let $J\colon V\to W$ be an isomorphic embedding. We show that
 $d(J\circ T_M,\cJ(\cB))\geq c$ where $c=\norm{J^{-1}}^{-1}$. Since
 $J\circ T_M\in\cJ(\cA)$, this shows that $\cJ(\cA)\neq \cJ(\cB)$.

 Let  $n\in\bn$,  $(N_j)_{j=1}^n\subset \cB$,
 $(A_j)_{j=1}^n\subset\cL(V,W)$ and $(B_j)_{j=1}^n\subset\cL(U)$. Put
 $N=\bigcup_{j=1}^n N_j$. We have $A_jT_{N_j}B_j=A_jQ_{N_j}T_NB_j$
 for $j=1,2,\dots,n$, and hence
 $\sum_{j=1}^nA_jT_{N_j}B_j\in\cJ^{T_N}(U,W)$. Since $M\setminus N$
 is infinite, it follows that
 \[
 \bignorm{J\circ T_M-\sum_{j=1}^nA_jT_{N_j}B_j}\geq
 \dist(J\circ T_M,\cJ^{T_N}(U,W))\geq c\ .
 \]
 Since $\cJ(\cB)$ is the closure of the set of operators of the form
 $\sum_{j=1}^nA_jT_{N_j}B_j$, the proof is complete.
\end{proof}
\begin{rem}
 A very simple duality argument (see~\cite[Proposition~7]{FSZ}
 and~\cite[Theorem~8]{FSZ}) shows that for $1<q<\infty$, the lattice
 of closed ideals of $\cL(\ell_1,\ell_q)$ is also of cardinality
 $2^{2^\omega}$. The same is true in
 $\cL\big(\ell_1,(\bigoplus_{n\in\bn}\ell_2^n)_{\co}\big)$.
\end{rem}
In~\cite{JS} it was shown that the cardinality of the set closed
ideals of $\cL(L_p)$, $1<p<\infty$, is $2^{2^\omega}$. Note that the
Hardy space $H_1$ and its predual VMO can be seen as the ``well
behaved'' limit cases of the $L_p$-spaces. For example $\ell_2$ is
complemented in both spaces, and $H_1$ contains a complemented copy of
$\ell_1$ and VMO a complemented copy of $\co$ (\textit{cf.}~\cite{M1}
and~\cite[page~125]{M2}), and thus we deduce the following corollary.
\begin{cor}
 The  cardinality of the lattice of closed ideals of
 $\cL(\text{VMO})$ and $\cL(H_1)$ is $2^{2^{\omega}}$.
\end{cor}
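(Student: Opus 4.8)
The plan is to transport the $2^{2^\omega}$ closed ideals already produced in $\cL(U,W)$ (the first Corollary) and in $\cL(\ell_1,\ell_q)$ (the Remark) into $\cL(\text{VMO})$ and $\cL(H_1)$ by conjugating with the relevant projections and inclusions, exactly as \cite{JS} passes from $\cL(L_p)$ to its complemented subspaces. The reverse inequality is automatic: VMO and $H_1$ are separable, so $\cL(\text{VMO})$ and $\cL(H_1)$ have cardinality at most $\continuum$ and hence at most $2^{2^\omega}$ subsets.

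\emph{The case of VMO.} I would take $p=2$, so that $U=(\oplus_{n\in\bn}\ell_2^{u_n})_{\ell_2}$ is a separable infinite-dimensional Hilbert space, hence $U\cong\ell_2$, and $V=(\oplus_{n\in\bn}\ell_\infty^{v_n})_{\co}\cong\co$. Fix an isomorphic embedding $\iota\colon U\to\text{VMO}$ onto a complemented subspace, with projection $P\colon\text{VMO}\to U$, $P\iota=\id_U$ (available because $\ell_2$ is complemented in VMO), and an isomorphic embedding $J\colon V\to\text{VMO}$ onto a complemented subspace, with projection $Q\colon\text{VMO}\to V$, $QJ=\id_V$ (available because VMO has a complemented copy of $\co$). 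Let $\cC$ be an almost disjoint family of infinite subsets of $\bn$ with $\abs{\cC}=2^\omega$, and for nonempty $\cA\subset\cC$ let $\cJ(\cA)$ be the closed ideal of $\cL(\text{VMO})$ generated by $\{JT_MP:M\in\cA\}$. I would show $\cJ(\cA)\neq\cJ(\cB)$ whenever $\cA\neq\cB$, arguing as in the proof of the first Corollary. Assuming $M\in\cA\setminus\cB$, take any $n\in\bn$, $(N_j)_{j=1}^n\subset\cB$, $(\alpha_j)_{j=1}^n,(\beta_j)_{j=1}^n\subset\cL(\text{VMO})$ and set $N=\bigcup_{j=1}^n N_j$. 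Since $T_{N_j}=Q_{N_j}T_N$,
\[
Q\Big(\sum_{j=1}^n\alpha_j\,(JT_{N_j}P)\,\beta_j\Big)\iota=\sum_{j=1}^n(Q\alpha_jJQ_{N_j})\,T_N\,(P\beta_j\iota)\in\cJ^{T_N}(U,V),
\]
while $Q(JT_MP)\iota=(QJ)T_M(P\iota)=T_M$. As $M\setminus N$ is infinite, $\dist(T_M,\cJ^{T_N}(U,V))\geq1$, so $\Bignorm{JT_MP-\sum_{j=1}^n\alpha_j(JT_{N_j}P)\beta_j}\geq\norm{Q}^{-1}\norm{\iota}^{-1}$. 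Hence $\dist(JT_MP,\cJ(\cB))\geq\norm{Q}^{-1}\norm{\iota}^{-1}>0$, and since $JT_MP\in\cJ(\cA)$ this gives $\cJ(\cA)\neq\cJ(\cB)$. Thus $\cA\mapsto\cJ(\cA)$ is injective on the $2^{2^\omega}$ nonempty subsets of $\cC$, and $\cL(\text{VMO})$ has exactly $2^{2^\omega}$ closed ideals.

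\emph{The case of $H_1$.} I would repeat this starting from the $2^{2^\omega}$ closed ideals of $\cL(\ell_1,\ell_2)$ supplied by the Remark (via \cite[Proposition~7]{FSZ} and \cite[Theorem~8]{FSZ}). Their generators are, up to fixed isomorphisms $V^*\cong\ell_1$ and $U^*\cong\ell_2$, the adjoints $T_M^*$, which inherit from $T_{N_j}=Q_{N_j}T_N$ the factorization $T_{N_j}^*=T_N^*Q_{N_j}^*$ (with $Q_{N_j}^*$ a norm-one projection), and the proof of the Remark also provides the separating functionals yielding $\dist(T_M^*,\cJ^{T_N^*}(\ell_1,\ell_2))\geq c>0$ when $M\setminus N$ is infinite. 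Since $\ell_1$ and $\ell_2$ are both complemented in $H_1$, conjugating by a projection $H_1\to\ell_1$ on the right and an isomorphic embedding $\ell_2\to H_1$ on the left transports these ideals into $2^{2^\omega}$ distinct closed ideals of $\cL(H_1)$ via the identical compression computation; separability of $H_1$ gives the matching upper bound. (Equivalently, since $\text{VMO}^*=H_1$, one can push the ideals $\cJ(\cA)$ forward by $T\mapsto T^*$, noting that $P^*$ is an isomorphic embedding of $\ell_2$ onto a complemented subspace of $H_1$ while $J^*\colon H_1\to\ell_1$ is a surjection.)

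\emph{Main obstacle.} The delicate step is the compression identity displayed above: one must check that conjugating a typical generator $\sum_j\alpha_j(JT_{N_j}P)\beta_j$ of $\cJ(\cB)$ by $Q(\,\cdot\,)\iota$ genuinely lands in $\cJ^{T_N}(U,V)$. This hinges precisely on $T_{N_j}=Q_{N_j}T_N$ (already exploited in the first Corollary) together with the fact that $Q$ and $\iota$ turn the $\cL(\text{VMO})$-multipliers $\alpha_j,\beta_j$ into $\cL(V)$- and $\cL(U)$-multipliers, so that the compressed sum has exactly the form $\sum_jA_jT_NB_j$ with $A_j\in\cL(V)$ and $B_j\in\cL(U)$. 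For $H_1$ the extra bookkeeping is confirming that the dualized construction of \cite{FSZ} retains both the factorization through $T_N^*$ and the separating functional, so the distance bound survives passage to adjoints; everything else is routine.
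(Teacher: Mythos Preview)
Your proposal is correct and follows essentially the same approach as the paper, which gives no explicit proof but simply notes that $\ell_2$ and $\co$ are complemented in VMO while $\ell_2$ and $\ell_1$ are complemented in $H_1$; your compression argument $Q(\cdot)\iota$ is precisely the standard way to spell out how complementation transfers the $2^{2^\omega}$ ideals from $\cL(\ell_2,\co)$ (respectively $\cL(\ell_1,\ell_2)$) into $\cL(\text{VMO})$ (respectively $\cL(H_1)$). Your separability upper bound is also the intended one.
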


\address{D.~Freeman,
Department of Mathematics and Statistics,
St Louis University,
St Louis, MO 63103\,USA}.

\email{\tt daniel.freeman@slu.edu}

\address{Th.~Schlumprecht,
Department of Mathematics,
Texas A\&M University,
College Station, TX 77843,
USA and
Faculty of Electrical Engineering, 
Czech Technical University in Prague
Zikova 4, 166 27, Prague,
Czech Republic.

\email{\tt t-schlumprecht@tamu.edu}}

\address{A.~Zs\'ak,
Peterhouse, Cambridge, CB2 1RD,
United Kingdom.

\email{\tt a.zsak@dpmms.cam.ac.uk}
\end{document}